\newtheorem{Theorem}{Theorem}
\def\P{\hbox{\font\dubl=msbm10 scaled 1100 {\dubl P}}}
\def\N{\hbox{\font\dubl=msbm10 scaled 1100 {\dubl N}}}
\def\sN{\hbox{\font\dubl=msbm10 scaled 800 {\dubl N}}}
\title[Deleting Digits]{Deleting Digits}
\author[Ioulia N. Baoulina, Martin Kreh, J\"orn Steuding]{Ioulia N. Baoulina, Martin Kreh, J\"orn Steuding}
\date{}
\begin{document}

\maketitle

\section{A Paper by Shallit on Primes}

We consider here the positive integers with respect to their unique decimal expansions, where each $n\in\mathbb N$ is given by $n=\sum_{j=0}^k \alpha_j10^{j}$ for some non-negative integer $k$ and digit sequence $\alpha_k\alpha_{k-1}\ldots\alpha_0$. With slight abuse of notation, we also use $n$ to denote $\alpha_k\alpha_{k-1}\ldots\alpha_0$. For such sequences of digits (as well as for the numbers represented by the corresponding expansions) we write $x\triangleleft y$ if $x$ is a subsequence of $y$, which means that either $x=y$ or $x$ can be obtained from $y$ by deleting some digits of $y$. For example, $514\triangleleft 352148$. The main problem is as follows: {\it given a set $S\subset \N$, find the smallest possible set $M\subset S$ such that for all $s\in S$ there exists $m\in M$ with $m \triangleleft s$.} As a matter of fact, the set 
$$
M(S):=\{s\in S\,\mid\, \{n\in S\,:\,n<s, n\triangleleft s\}=\varnothing\}
$$ 
solves this problem; clearly, every element of $S$ contains some element of $M(S)$ as a subword. The set $M(S)$ is said to be the {\it minimal set} and its elements are also called {\it minimal}. 
\par

Remarkably, {\it $M(S)$ is finite for every possible $S\subset \N$}. This follows from a celebrated theorem from the theory of formal languages, the so-called lemma of Higman \cite{higman} (see also the Chapter entitled {\it Subwords} by Jacques Sakarovitch and Imre Simon \cite{lothaire} in Lothaire's  encyclopaedia {\it Combinatorics on Words}).
\par

In general, it seems to be difficult to compute $M(S)$ for a given $S$, the known proofs of Higman's lemma are ineffective. In the case of the set of prime numbers $\P=\{2,3,5,\ldots\}$ Jeffrey Shallit \cite{shallit} succeeded in determining $M(\P)$ by elementary means, namely
\begin{eqnarray*}
M(\P)&=&\{2,3,5,7,11,19,41,61,89,409,449,499,881,991,6469,\\
&& \ 6949,9001,9049,9649,9949,60649,666649,946669,\\
&& \ 60000049,66000049,66600049\}. 
\end{eqnarray*}
Already by a first glimpse on the elements of $M(\P)$ one can guess the kind of reasoning necessary to prove this explicit form for the minimal set of primes. However, in the case of the set $2^{\sN}=\{2^n\,:\,n\in\N_0\}$ consisting of the powers of $2$ Shallit \cite{shallit} conjectured
$$
M(2^{\sN})=\{1,2,4,8,65536\}.
$$
\pagebreak\newpage\noindent
Moreover, he observed that this is true if every number $16^m$ with $m\geq 4$ has at least one digit from $\{1,2,4,8\}$. This type of problem seems to be difficult to tackle. Of course, we know that the powers of $2$ are distributed according to Benford's law (i.e., the sequence $n\log_{10}2$ is uniformly distributed modulo one),  nevertheless, this just indicates - in an appropriate probabilistic framework - that a power of $2$ without any digit from $\{1,2,4,8\}$ must be a very rare event.
\par

In this note we present our new results on the minimal sets of a few other arithmetically interesting sets of positive integers (in Section 2 \& 3), we furthermore address questions on size and shape of minimal sets in general and indicate the absence of structure behind (Section 4), and finally we conclude with another example, namely the hypothetical minimal set for perfect numbers and analogous question for other bases (Section 5). Our main aim is to make the beautiful theorem of Higman and Shallit's paper more popular in order to attract more research in this direction.

\section{Sums of Squares and Quadratic Residues}

In this section we look at three subsets of $\N$ for which the minimal sets can be determined easily. Whereas in the first example the reason for this is that the examined set contains many digits, in the second and third examples we exploit the fact that the elements of the sets are in some sense well-distributed. For the first example, this means that we have almost all digits lying in the set, which means that there are just few numbers that we are left to examine. In the other two examples, the elements of the examined sets lie in certain residue classes for some modulus, which will be helpful. Other explicit constructions of minimal sets of residue classes and some discussions about the structure can be found in \cite{kreh}.

\begin{Theorem}
Let 
\begin{equation*}
\fbox{$3$} := \lbrace n \in \N\,\mid\, \exists x,y,z \in \N_0 : n = x^2+y^2+z^2 \rbrace.
\end{equation*}
Then
\begin{equation*}
M(\fbox{$3$}) = \lbrace 1,2,3,4,5,6,8,9,70,77 \rbrace.
\end{equation*}
\end{Theorem}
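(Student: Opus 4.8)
The plan is to reduce the membership problem for $\fbox{$3$}$ to Legendre's classical three-square theorem and then to pin down the minimal set by a short combinatorial argument over the two-letter alphabet $\{0,7\}$.

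\emph{Step 1 (membership).} I would first recall that $n\in\fbox{$3$}$ if and only if $n$ is \emph{not} of the form $4^{a}(8b+7)$ with $a,b\in\N_{0}$. From this one reads off that $7$ is the only one-digit positive integer outside $\fbox{$3$}$ (indeed $7=8\cdot 0+7$), whereas $1,2,3,4,5,6,8,9$ all lie in $\fbox{$3$}$; this is also visible directly, e.g. $9=3^{2}$, $8=2^{2}+2^{2}$, $6=2^{2}+1^{2}+1^{2}$. For the two two-digit candidates one checks $70,77\notin\{4^{a}(8b+7)\}$, or simply exhibits $70=6^{2}+5^{2}+3^{2}$ and $77=8^{2}+3^{2}+2^{2}$.

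\emph{Step 2 (minimality).} I would then verify that each of the ten listed numbers is minimal. For a one-digit number this is automatic, since its only proper subword is the empty string, which is not a positive integer. The proper subwords of $70$ are $7$ and $0$, and the only proper subword of $77$ is $7$; as $7\notin\fbox{$3$}$ and $0\notin\N$, neither $70$ nor $77$ has a proper subword lying in $\fbox{$3$}$, so both are minimal.

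\emph{Step 3 (completeness).} The core of the proof is to show that every $n\in\fbox{$3$}$ has one of the ten listed numbers as a subword; since, by the definition of $M(S)$, a complete subset of $S$ consisting only of minimal elements must coincide with $M(S)$, this together with Step 2 gives the theorem. If some digit of $n$ lies in $\{1,2,3,4,5,6,8,9\}$, that digit is one of our minimal elements and a subword of $n$, and we are done. Otherwise every digit of $n$ belongs to $\{0,7\}$, and since the leading digit is nonzero it equals $7$. If $n$ has at least two digits, then either all of them are $7$, so that $77\triangleleft n$, or at least one is $0$ --- necessarily in a position less significant than the leading $7$ --- so that $70\triangleleft n$. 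The only remaining possibility is $n=7$, which is not in $\fbox{$3$}$.

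\emph{Where the difficulty lies.} There is no serious obstacle. The one point to watch in Step 3 is that decimal expansions carry no leading zeros, so a $\{0,7\}$-string with at least two digits and nonzero leading digit genuinely contains $70$ or $77$; the lone survivor $7$ of this reduction is then conveniently ruled out of $\fbox{$3$}$ by Legendre's theorem. Everything else is routine verification.
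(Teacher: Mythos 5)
Your proof is correct and follows essentially the same route as the paper: verify the ten listed numbers lie in $\fbox{$3$}$, use $7\notin\fbox{$3$}$ to get minimality of $70$ and $77$, and establish completeness by splitting on whether $n$ has a digit in $\{1,2,3,4,5,6,8,9\}$ or consists only of the digits $0$ and $7$. Citing Legendre's three-square theorem for the membership checks is a harmless variant of the paper's direct verifications.
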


\begin{proof}
It is easy to see that each of the numbers $1,2,3,4,5,6,8,9,70,77$ can be written as a sum of three squares, so they belong to $\fbox{3}$. Clearly $1,2,3,4,5,6,8,9$ belong to $M(\fbox{3})$. Since $7 \notin \fbox{3}$, $70$ and $77$ also belong to $M(\fbox{3})$. If $n \in \fbox{3}$ is different from $1,2,3,4,5,6,8,9,70,77$, then either $d \triangleleft n$ for $d \in \lbrace 1,2,3,4,5,6,8,9 \rbrace$ and $n$ cannot belong to $M(\fbox{3})$ or $n$ consists only of digits $0$ and $7$. In the latter case either $70 \triangleleft n$ or $77 \triangleleft n$, and so $n\notin M(\fbox{3})$.
\end{proof}

Two strings of digits $x$ and $y$ are called {\it incomparable} if neither $x\triangleleft y$ nor $y\triangleleft x$. The reader might have noticed that a subset of $S$ consisting of pairwise incomparable elements is in general not contained in $M(S)$ (as the example $S=\N$ and the subset of all two digit numbers shows). However, in the previous proof for any element $x$ in the set $\{1,2,3,4,5,6,8,9,70,77\}$ of pairwise incomparable elements, there was no $y \in \fbox{3}, y \neq x$ with $y \triangleleft x$. In this case the set of pairwise incomparable elements is indeed contained in $M(S)$. We shall use this idea quite often in the sequel.

\begin{Theorem}
Let 
\begin{equation*}
\Box\bmod\,6 := \lbrace n \in \N\,\mid\, \exists x \in \N : x^2 \equiv n \mod 6 \rbrace.
\end{equation*}
Then
\begin{equation*}
M(\Box\bmod\,6) = \lbrace 1,3,4,6,7,9,22,25,28,52,55,58,82,85,88 \rbrace.
\end{equation*}
\end{Theorem}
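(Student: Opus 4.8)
The plan is to make the set $\Box\bmod 6$ completely explicit and then to argue as in the two preceding theorems: exhibit a family of pairwise incomparable elements of $\Box\bmod 6$, none of which has a proper subword lying in $\Box\bmod 6$, and check that every other element of $\Box\bmod 6$ has one of them as a subword.

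First I would write down the quadratic residues modulo $6$ by letting $x$ run through $0,1,\ldots,5$: the squares are $0,1,4,3,4,1\pmod{6}$, so $\Box\bmod 6$ consists of the positive integers $n$ with $n\equiv 0,1,3,4\pmod{6}$, equivalently those with $n\not\equiv 2,5\pmod{6}$. In particular the one-digit members of $\Box\bmod 6$ are precisely $1,3,4,6,7,9$, whereas $2,5,8\notin\Box\bmod 6$; since a one-digit number has no proper subword, $1,3,4,6,7,9$ are automatically minimal.

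Next I would pin down the minimal elements having at least two digits. If $n$ is such an element, it cannot use any of the digits $1,3,4,6,7,9$, for otherwise that digit is a (smaller) subword of $n$ lying in $\Box\bmod 6$, so $n\notin M(\Box\bmod 6)$; hence every digit of $n$ lies in $\{0,2,5,8\}$ and the leading digit lies in $\{2,5,8\}$. The decisive congruence is $10\equiv 4\pmod{6}$ together with $4^2\equiv 4\pmod{6}$, which give $10^k\equiv 4\pmod{6}$ for every $k\geq 1$; hence $a\cdot 10^k\equiv 4a\equiv 2\pmod{6}$ for all $a\in\{2,5,8\}$ and $k\geq 1$, so no number of the form $a\cdot 10^k$ (in particular none of $20,50,80,200,\ldots$) belongs to $\Box\bmod 6$. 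Consequently our $n$ has at least two nonzero digits, all of them in $\{2,5,8\}$. Taking the two leading nonzero digits of $n$, say $a$ followed by $b$, exhibits a two-digit subword $10a+b$ of $n$ with $a,b\in\{2,5,8\}$; checking $10a+b\equiv 4a+b\pmod{6}$ over the nine pairs shows that each such two-digit number lies in $\Box\bmod 6$, and that they are exactly $22,25,28,52,55,58,82,85,88$. Thus if $n$ has more than two digits it is not minimal (the subword $10a+b$ is smaller and already lies in $\Box\bmod 6$), while if $n$ has exactly two digits it must be one of these nine. Finally each of the nine is minimal, its only proper subwords being single digits from $\{2,5,8\}$, which do not lie in $\Box\bmod 6$; and the fifteen numbers $1,3,4,6,7,9,22,25,28,52,55,58,82,85,88$ are pairwise incomparable. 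Assembling these observations gives the asserted description of $M(\Box\bmod 6)$.

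Since every step is elementary, I do not expect a real obstacle; the only point requiring a little care is the reduction of the "at least two digits" case, where one must first discard the numbers $a\cdot 10^k$ with $a\in\{2,5,8\}$ before passing to a two-digit subword, and this is exactly what the congruence $10^k\equiv 4\pmod{6}$ delivers.
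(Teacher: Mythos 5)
Your proof is correct and follows essentially the same route as the paper: reduce to digits in $\{0,2,5,8\}$, rule out numbers with a single nonzero digit (they are $\equiv 2\bmod 6$), and observe that any remaining multi-digit candidate contains one of the nine two-digit strings over $\{2,5,8\}$, all of which lie in $\Box\bmod 6$. The only cosmetic difference is that you justify the single-nonzero-digit case via $10^k\equiv 4\bmod 6$ while the paper argues via the digit being $\equiv 2\bmod 3$ together with parity; the substance is identical.
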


\begin{proof}
It is easy to check that $n \in \Box\bmod\,6$ if and only if $n \equiv 1,3,4,6 \mod 6$. So we have indeed $\lbrace 1,3,4,6,7,9,22,25,28,52,55,58,82,85,88 \rbrace \subset \Box\bmod\,6$. Moreover, these elements are pairwise incomparable. Let $n \in \Box\bmod\,6$ be arbitrary. If $d \triangleleft n$ with $d \in \lbrace 1,3,4,6,7,9 \rbrace$, we are done. So suppose that $n$ contains none of these digits. Assume first that $n$ contains exactly two digits. Then $22,25,28,52,55,58,82,85,88 \in \Box\bmod\,6$ and $20,50,80 \notin \Box\bmod\,6$. If $n$ has more than two digits, then either two of its digits are nonzero and so there is an $x \in M(\Box\bmod\,6)$ with $x<n$, $x \triangleleft n$, or $n$ has only one nonzero digit. In the latter case, this digit is congruent to $2$ modulo $3$, hence $n \equiv 2 \mod 6$, and therefore $n \notin \Box\bmod\,6$.
\end{proof}

\begin{Theorem}
Let 
\begin{equation*}
\Box\bmod\,7 := \lbrace n \in \N\,\mid\,\exists x \in \N : x^2 \equiv n \mod 7 \rbrace.
\end{equation*}
Then
\begin{equation*}
M(\Box\bmod\,7) = \lbrace 1,2,4,7,8,9,30,35,36,50,53,56,60,63,65,333,555,666 \rbrace.
\end{equation*}
\end{Theorem}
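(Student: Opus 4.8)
The plan is to mirror the two preceding proofs. Since the quadratic residues modulo $7$ are $0,1,2,4$, we have $n\in\Box\bmod\,7$ exactly when $n\equiv0,1,2,4\pmod 7$; thus the digits $3,5,6$ are non-residues, while $0,1,2,4,7,8,9$ are residues, and $1,2,4,7,8,9$ are precisely the one-digit members of $\Box\bmod\,7$. The whole argument will hinge on this split of the digit alphabet.

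The easy inclusion comes first. A direct residue computation shows that each of the eighteen listed numbers lies in $\Box\bmod\,7$, and they are pairwise incomparable; more to the point, none of them has a proper subword in $\Box\bmod\,7$. For the one-digit entries this is vacuous; for $30,35,36,50,53,56,60,63,65$ every proper subword is a single digit from $\{0,3,5,6\}$, none of which lies in $\Box\bmod\,7$; and for $333,555,666$ one needs in addition the congruences $33\equiv5$, $55\equiv6$, $66\equiv3\pmod 7$, which show $33,55,66\notin\Box\bmod\,7$. By the remark following Theorem~1, all eighteen numbers are therefore minimal.

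For the reverse inclusion, let $n\in\Box\bmod\,7$ fail to be one of the eighteen; I must produce $m\in\Box\bmod\,7$ with $m\triangleleft n$ and $m<n$. If $n$ contains one of the digits $1,2,4,7,8,9$, that digit serves as $m$. Otherwise $n$ uses only digits from $\{0,3,5,6\}$. If $n$ has at least two distinct non-zero digits, keeping the leftmost non-zero digit together with the leftmost non-zero digit different from it (necessarily farther to the right) yields a two-digit subword $m\in\{35,36,53,56,63,65\}\subset\Box\bmod\,7$. If instead every non-zero digit of $n$ equals one and the same value $c\in\{3,5,6\}$, occurring $k$ times, then for $k\ge3$ the subword formed by three of these copies of $c$ is one of $333,555,666$, whereas for $k\le2$, were $n$ free of the digit $0$ it would equal $c$ (if $k=1$) or one of $33,55,66$ (if $k=2$), all non-residues; hence $n$ must contain a $0$, and its leading digit $c$ together with a later $0$ forms the subword $m\in\{30,50,60\}\subset\Box\bmod\,7$. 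In each branch $m$ is among the eighteen while $n$ is not, so $m\neq n$, and since $m\triangleleft n$ this forces $m<n$; thus $n$ is not minimal. Combining the two inclusions gives $M(\Box\bmod\,7)$ as claimed.

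The residue bookkeeping is mechanical; the only step carrying genuine content --- and the one I expect to require real care --- is the last case, which relies on the coincidence that $33,55,66$ are quadratic non-residues modulo $7$ while $30,50,60$ and $333,555,666$ are residues. This is exactly what pushes the ``second and third layers'' $\{30,\ldots,65\}$ and $\{333,555,666\}$ into the minimal set and simultaneously blocks any further elements from appearing.
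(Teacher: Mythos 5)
Your proposal is correct and follows essentially the same route as the paper: verify that the eighteen listed numbers are quadratic residues modulo $7$ with no proper subword in the set (using $3,5,6,33,55,66\not\equiv 0,1,2,4\pmod 7$), then show any other element of $\Box\bmod 7$ built only from the digits $0,3,5,6$ contains one of $30,35,36,50,53,56,60,63,65,333,555,666$ as a subword. Your case split (two distinct non-zero digits versus a single repeated non-zero digit, possibly with zeros) is just a slightly more explicit organization of the paper's split by the number of digits, so there is no substantive difference.
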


\begin{proof}
Since $n \in \Box\bmod\,7$ if and only if $n \equiv 1,2,4,7 \mod 7$, we have $\lbrace 1,2,4,7,8,9,30,35,36,50,53,56,60,63,65,333,555,666 \rbrace \subset \Box\bmod\,7$; again these elements are pairwise incomparable. Let $n \in \Box\bmod\,7$. If $d \triangleleft n$ with $d \in \lbrace 1,2,4,7,8,9 \rbrace$, we are done. So suppose that $n$ contains none of these digits. If $n$ has exactly two digits, then $30,35,36,50,53,56,60,63,65 \in \Box\bmod\,7$ and $33,55,66 \notin \Box\bmod\,7$. If $n$ has at least three digits and at least two of them are distinct we are done. It therefore remains to assume that $n$ is of the form $dd\ldots dd$ with $d \in \lbrace 3,5,6 \rbrace$. Then either $333 \triangleleft n$ or $555 \triangleleft n$ or $666 \triangleleft n$.
\end{proof}

\section{Values of Arithmetical Functions}

In this section, we consider sets of the type $S=\{n \in \N\,\mid\, \exists x \in \N : f(x)=n\}$ where $f:\N \rightarrow \N$ is an arithmetical function. Recall that the Euler function $\varphi(n)$ is defined to be the number of positive integers not exceeding $n$ which are relatively prime to $n$. If $n=\prod_{p|n} p^{k_p}$ is the unique prime factorization of $n$, then $\varphi(n)=\prod_{p|n} (p-1) p^{k_p-1}$.

\begin{Theorem}
Let
\begin{equation*}
\varphi(\N) := \lbrace n \in \N\,\mid\, \exists x \in \N : \varphi(x)=n \rbrace.
\end{equation*}
Then
\begin{equation*}
M(\varphi(\N)) = \lbrace 1, 2, 4, 6, 8, 30, 70, 500, 900, 990, 5590, 9550, 555555555550\rbrace.
\end{equation*}
\end{Theorem}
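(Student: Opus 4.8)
The plan is the standard three–step argument. First, I would exhibit a preimage for each listed value: $\varphi(1)=1$, $\varphi(4)=2$, $\varphi(5)=4$, $\varphi(7)=6$, $\varphi(15)=8$, $\varphi(31)=30$, $\varphi(71)=70$, $\varphi(5^{4})=500$, $\varphi(7\cdot 151)=900$, $\varphi(991)=990$, $\varphi(5591)=5590$, $\varphi(9551)=9550$ and $\varphi(555555555551)=555555555550$, using that $991$, $5591$, $9551$ and $555555555551$ are prime. Second, I would observe that the thirteen numbers are pairwise incomparable and that every proper subword of any one of them which is a genuine positive integer is a nontotient; just as in the previous sections this already places all thirteen in $M(\varphi(\N))$. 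Third --- the substantial part --- I would show that every totient not on the list has one of the thirteen as a subword, so $M(\varphi(\N))$ can contain nothing more.

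The engine behind the last two steps is the elementary classification: \emph{an integer $n\equiv 2\pmod 4$ is a totient if and only if $n=p^{e-1}(p-1)=\varphi(p^{e})$ for some prime $p\equiv 3\pmod 4$ and some $e\ge 1$}. This drops out of inspecting $v_{2}(\varphi(x))$ in terms of the factorisation of $x$: the equation $v_{2}(\varphi(x))=1$ forces $x$ to have exactly one odd prime factor $p$, necessarily $p\equiv 3\pmod 4$, with $v_{2}(x)\le 1$.

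For the third step I would argue as follows. Let $t$ be a totient not on the list. A digit of $t$ lying in $\{1,2,4,6,8\}$ is itself a list element, so assume all digits of $t$ lie in $\{0,3,5,7,9\}$; since $t>1$ is a totient it is even, so its last digit is $0$, whence a digit $3$ forces $30\triangleleft t$ and a digit $7$ forces $70\triangleleft t$. So the digits of $t$ lie in $\{0,5,9\}$, and if none of $500,900,990$ is a subword of $t$ then --- the leading digit being $5$ or $9$ --- the only $0$ in $t$ is its last digit and $t$ has at most one $9$; thus $t=\underbrace{5\cdots5}_{k}0$ or $t=\underbrace{5\cdots5}_{a}\,9\,\underbrace{5\cdots5}_{b}\,0$. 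In the first shape $k\ge 11$ gives $555555555550\triangleleft t$; in the second, $a\ge 2$ gives $5590\triangleleft t$ and $b\ge 2$ gives $9550\triangleleft t$. Hence everything reduces to showing that $\underbrace{5\cdots5}_{k}0=50R_{k}$, with $R_{k}=(10^{k}-1)/9$ the $k$-th repunit (odd and coprime to $5$), is not a totient for $1\le k\le 10$, and that $90$, $590$, $950$, $5950$ are not totients. Each of these is $\equiv 2\pmod 4$, so by the classification it would be $p^{e-1}(p-1)$ with $p\equiv 3\pmod 4$ prime; for $90,590,950,5950$ this is a short finite check. For $\underbrace{5\cdots5}_{k}0$ the case $e=1$ requires $50R_{k}+1=\underbrace{5\cdots5}_{k}1$ to be prime, and one exhibits a proper factor in every case with $k\le 10$ (e.g.\ $3\mid\underbrace{5\cdots5}_{k}1$ whenever $k\equiv 1\pmod 3$); the case $e=2$ requires $1+200R_{k}$ to be a perfect square with $p=\tfrac12\bigl(1+\sqrt{1+200R_{k}}\bigr)$ prime, which is impossible for $k\equiv 1,2\pmod 6$ by reduction modulo $7$ and is ruled out for the remaining $k\le 10$ directly (the lone square, at $k=3$, gives $p=75$); and the case $e\ge 3$ requires $p^{2}\mid R_{k}$, which among $k\le 10$ occurs only for $R_{9}=3^{2}\cdot 37\cdot 333667$, where $p=3$ forces the contradiction $p-1=2\neq 50R_{9}/9$. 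The second step then follows, because every proper positive-integer subword of a listed number is either odd --- hence a nontotient greater than $1$ --- or one of the numbers just treated.

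I expect the one genuinely non-formal ingredient to be the finite arithmetic of the previous paragraph: checking that $\underbrace{5\cdots5}_{k}1$ is composite for $1\le k\le 10$ whereas $555555555551$ is prime, along with the primality of $991,5591,9551$ and the compositeness of $91,591,951,5951$ --- in effect, confirming that the smallest ``run of fives followed by a zero'' totient is exactly $555555555550$. All the rest is the combinatorial bookkeeping above.
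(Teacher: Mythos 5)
Your proposal is correct and follows essentially the same route as the paper: explicit preimages, pairwise incomparability together with the fact that all proper positive-integer subwords of the listed numbers are nontotients, and a digit analysis resting on the classification of totients $\equiv 2 \pmod 4$ as $\varphi(p^e)=p^{e-1}(p-1)$, which reduces everything to finite checks that $90,590,950,5950$ and $\underbrace{5\cdots5}_{k}0$ for $k\le 10$ are nontotients (and that $\underbrace{5\cdots5}_{11}1$ is prime). The differences are only in bookkeeping: you sharpen the classification to $p\equiv 3\pmod 4$, organize the family via repunits $50R_k$, and dispose of the $e=2$ case with a square criterion and a mod $7$ obstruction, where the paper simply factorizes the relevant numbers with a computer algebra package.
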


\begin{proof}
Observe that the numbers above are pairwise incomparable. Further, as
\begin{align*}
\varphi(1)&=1,&\varphi(3)&=2,&\varphi(5)&=4,&\varphi(7)&=6,\\
\varphi(16)&=8,&\varphi(31)&=30,&\varphi(71)&=70,&\varphi(625)&=500,\\
\varphi(1057)&=900,&\varphi(991)&=990,&\varphi(5591)&=5590,&\varphi(9551)&=9550,
\end{align*}
and
$$
\varphi(555555555551)=555555555550,
$$
we see that $\lbrace 1, 2, 4, 6, 8, 30, 70, 500, 900, 990, 5590, 9550, 555555555550\rbrace\subset \varphi(\N)$. Note also that $3,5,7,9\notin \varphi(\N)$. Now assume that $n\in \varphi(\N)$ has at least two digits. If there exists $d\in\{1,2,4,6,8\}$ such that $d \triangleleft n$ then $n\notin M(\varphi(\N))$. Suppose that $n$ contains only the digits $0,3,5,7,9$. Since $\varphi(m)$ is even for $m>2$, the last digit is $0$. It is easy to see that $50,90 \notin \varphi(\N)$. Now assume that $n$ has at least three digits. If $n$ contains the digits $3$ or $7$, then $30 \triangleleft n$ or $70 \triangleleft n$, respectively. Consequently, $n\notin M(\varphi(\N))$. Next assume that $n$ contains only the digits $0,5,9$. It is easily verified that $550,590,950\notin \varphi(\N)$. Suppose that $n$ has at least four digits. Then we have the following possibilities:
\begin{itemize}
\item $n$ contains at least two zeros. Then either $500 \triangleleft n$ or $900 \triangleleft n$.

\item $99 \triangleleft n$. Then $990 \triangleleft n$.

\item $559 \triangleleft n$. Then $5590 \triangleleft n$.

\item $955 \triangleleft n$. Then $9550 \triangleleft n$.

\item $n=5950$. Then $n\notin \varphi(\N)$.

\item $n=\underbrace{55\ldots 5}_{\ell}0$ with $\ell\ge 3$.
\end{itemize}
Therefore, it remains to show that if $n=\underbrace{55\ldots 5}_{\ell}0$ with $3\le\ell\le 10$, then $n\notin \varphi(\N)$. To this end, assume that $\varphi(m)=n$ for some $m\in\N$. Since $4\nmid n$, we have $4\nmid m$ and $m$ has exactly one odd prime divisor, that is $m=p^k$ or $m=2p^k$ with $k\in\N$ and odd $p\in\P$. In both cases we must have $p^{k-1}(p-1)=n$. Using a computer algebra package, we find the unique prime factorization of each of the values of $n$ under consideration, namely,
\begin{align*}
5550&=2\cdot3\cdot5^2\cdot37, & 55550&=2\cdot5^2\cdot11\cdot101,\\
555550&=2\cdot5^2\cdot41\cdot271, & 5555550&=2\cdot3\cdot5^2\cdot7\cdot11\cdot13\cdot37,\\
55555550&=2\cdot5^2\cdot239\cdot4649, & 555555550&=2\cdot5^2\cdot11\cdot73\cdot101\cdot137,\\
5555555550&=2\cdot3^2\cdot5^2\cdot37\cdot333667, & \!\!\!\!55555555550&=2\cdot5^2\cdot11\cdot41\cdot271\cdot9091.
\end{align*}
We see that if $p^{k-1}\mid n$ with $k\ge 3$, then $k=3$ and $p=3$ or $5$, and so $p^{k-1}(p-1)\ne n$. Further, it is easy to check that there is no prime number $p$ with $p(p-1)=n$. Hence $k=1$ and $n+1$ is a prime. However, all the numbers $\underbrace{55\ldots 5}_{\ell}1$, $3\le\ell\le 10$, are composite, and this gives the desired contradiction.
\end{proof}

It is not difficult to find minimal sets for some sets of shifted values of the Euler function. For example, it can be proved that
$$
M(3+\varphi(\N))=\lbrace 4, 5, 7, 9, 11, 13, 21, 23, 31, 33, 61, 63, 81, 83\rbrace,
$$
where
$$
3+\varphi(\N):= \lbrace n \in \N\,\mid\, \exists x \in \N : 3+\varphi(x)=n \rbrace.
$$
We leave this as an exercise for the interested reader.

Related to Euler's function but less well-known is the Dedekind $\psi$-function, for $n = \prod_{p\mid n} p^{k_p}$ defined by
\begin{equation*}
\psi(n) := \prod_{p\mid n} (p+1) p^{k_p-1}.
\end{equation*}
As we shall see in the proof of the following theorem, the determination of the corresponding minimal set is possible, although requires a little more effort than in the case of Euler's $\varphi$.

\begin{Theorem}
Let $\psi$ be the Dedekind $\psi$-function and
\begin{equation*}
\psi(\N) := \lbrace n \in \N\,\mid\, \exists x \in \N : \psi(x)=n \rbrace.
\end{equation*}
Then
\begin{align*}
M(\psi(\N)) = \lbrace &1,3,4,6,8,20,72,90,222,252,500,522,552,570,592,750,770, \\
&992,7000,\underbrace{55\ldots 5}_{69}0\rbrace.
\end{align*}
\end{Theorem}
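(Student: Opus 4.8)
The plan is to follow the template of the earlier proofs in this section. Write $\mathcal{M}$ for the set on the right-hand side. One first checks that its elements are pairwise incomparable and lie in $\psi(\N)$: preimages are easily exhibited --- for instance $20=\psi(19)$, $552=\psi(23^2)$, $222=\psi(2\cdot 73)$, $592=\psi(7\cdot 73)$, $7000=\psi(13\cdot 499)$, and the one-digit entries are $\psi(1),\psi(2),\psi(3),\psi(5),\psi(7)$ --- with the single exception of $\underbrace{5\cdots 5}_{69}0$, for which a computer algebra package supplies a prime preimage. It then suffices to verify (a) that no element of $\mathcal{M}$ has a proper subword lying in $\psi(\N)$, and (b) that every element of $\psi(\N)$ has a subword in $\mathcal{M}$; together with $\mathcal{M}\subseteq\psi(\N)$ this gives $\mathcal{M}=M(\psi(\N))$.

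Two elementary facts do most of the work. Since $p+1$ is even for every odd prime $p$, the value $\psi(m)$ is odd only for $m\in\{1,2\}$, so every element of $\psi(\N)$ with at least two digits is even; and the only one-digit elements of $\psi(\N)$ are $1,3,4,6,8$, the digits $2,5,7,9$ not being $\psi$-values. Hence for $n\in\psi(\N)$ we may assume all digits of $n$ lie in $\{0,2,5,7,9\}$ (otherwise $n$ has a one-digit subword in $\mathcal{M}$), and then, if $n\ge 10$, the last digit of $n$ is $0$ or $2$. A finite check disposes of all such $n$ with at most three digits, by enumerating the finitely many strings over $\{0,2,5,7,9\}$ of length at most three and testing membership in $\psi(\N)$.

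So let $n\in\psi(\N)$ have at least four digits, all in $\{0,2,5,7,9\}$, and no subword in $\mathcal{M}$. Avoiding the subwords $20$, $72$, $90$ forces every $0$ of $n$ to precede every $2$ and every $9$, and every $2$ to precede every $7$. If the last digit of $n$ is $2$, then $n$ has no $7$, and a short case analysis --- according as $n$ does or does not contain a $0$, the leading digit being forced to $5$ in the former case, and using that $n$ avoids $222$, $252$, $522$, $552$, $592$, $992$ and $500$ --- shows no such $n$ exists. If the last digit of $n$ is $0$, then $n$ has no $2$ and no $9$, so its digits lie in $\{0,5,7\}$; avoiding $570$, $750$, $770$ and $7000$ forces every nonzero digit of $n$ to be $5$, and avoiding $500$ then forces $n$ to contain exactly one $0$, so $n=\underbrace{5\cdots 5}_{\ell}0$ with $\ell\ge 3$; avoiding $\underbrace{5\cdots 5}_{69}0$ gives $\ell\le 68$. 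Everything thus reduces to showing $\underbrace{5\cdots 5}_{\ell}0\notin\psi(\N)$ for $3\le\ell\le 68$ --- which also settles (a), because the proper subwords of any element of $\mathcal{M}$ are either among the short strings already treated, or of the form $\underbrace{5\cdots 5}_{k}$ (odd, hence not a $\psi$-value) or $\underbrace{5\cdots 5}_{k}0$ with $k\le 68$.

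This final assertion is where the real work lies, and I expect it to be the main obstacle. Put $N_\ell=\underbrace{5\cdots 5}_{\ell}0=2\cdot 5^2\cdot R_\ell$, where $R_\ell=(10^\ell-1)/9$ is the $\ell$-th repunit, so that $N_\ell/2$ is odd. If $\psi(m)=N_\ell$, counting powers of $2$ on both sides forces $m=p^a$ or $m=2p^a$ with $p$ an odd prime $\equiv 1\pmod 4$; accordingly $p^{a-1}(p+1)$ equals $N_\ell$, respectively $N_\ell/3$ (the latter only when $3\mid\ell$). The case $a\ge 2$ is cleared away by a routine finite computation: for $a=2$ one checks that neither $N_\ell$ nor $N_\ell/3$ is a product of two consecutive integers, and for $a\ge 3$ one has $p^2\mid N_\ell$, so --- since $p\equiv 1\pmod 4$ while the primes whose squares divide some repunit $R_\ell$ with $\ell\le 68$, namely $3$, $7$ and $11$, are all $\equiv 3\pmod 4$ --- only $p=5$ survives, giving the too-small values $\psi(5^2)=30$, $\psi(5^3)=150$. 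Thus $a=1$, and everything comes down to showing that $N_\ell-1=\underbrace{5\cdots 5}_{\ell-1}49$ is composite for every $3\le\ell\le 68$, and likewise $\tfrac13 N_\ell-1$ whenever $3\mid\ell$. This is a finite but genuinely computational task --- primality testing and factoring of integers of up to about seventy digits, carried out with a computer algebra package --- and it is precisely here that the bound $\ell=69$ enters: $\underbrace{5\cdots 5}_{68}49$ (or, should this be composite, $\tfrac13 N_{69}-1$) turns out to be prime.
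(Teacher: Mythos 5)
Your proposal is correct and follows essentially the same route as the paper's proof: exhibit preimages and pairwise incomparability, use parity and the forbidden digits $\{1,3,4,6,8\}$ plus a finite check on short strings, reduce the remaining case analysis to $n=\underbrace{5\cdots5}_{\ell}0$ with $3\le\ell\le 68$, and then rule these out by writing $m=p^a$ or $2p^a$ (forced by $n\equiv 2\bmod 4$), eliminating $a\ge 2$ by a finite computation and $a=1$ by the compositeness of $N_\ell-1$ and $N_\ell/3-1$, with the primality of $\underbrace{5\cdots5}_{68}49$ supplying membership of $\underbrace{5\cdots5}_{69}0$. The only cosmetic difference is that you dispose of $a\ge 3$ via the primes whose squares divide a repunit $R_\ell$, $\ell\le 68$, where the paper simply cites a computer search on the factorizations of the $N_\ell$; both are the same finite, computer-assisted verification.
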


\begin{proof}
First note, that the elements given above are pairwise incomparable and we have
$$
\begin{array}{rclrclrclrcl}
\psi(1)&=&1, &\psi(2)&=&3, &\psi(3)&=&4, &\psi(5)&=&6,\\
\psi(7)&=&8, &\psi(19)&=&20, &\psi(71)&=&72, &\psi(89)&=&90,\\
\psi(146)&=&222, &\psi(251)&=&252, &\psi(499)&=&500, &\psi(521)&=&522, \\
\psi(411) &=&552, &\psi(569)&=&570, &\psi(511)&=&592, &\psi(625)&=&750,\\
\psi(769)&=&770, &\psi(991)&=&992, &\psi(6631)&=&7000&&&
\end{array}
$$
and
$$
\psi(\underbrace{55\ldots 5}_{68}49)=\underbrace{55\ldots 5}_{69}0,
$$
since $\underbrace{55\ldots 5}_{68}49$
is a prime, all these numbers belong to $\psi(\N)$. 
\par

Now let $n \in \psi(\N)$ be arbitrary with at least two digits. If $d \triangleleft n$ with $d \in\{1,3,4,6,8\}$, then $n \notin M(\psi(\N))$. So $n$ does only contain the digits $2,5,7,9,0$. It follows from the product formula $\psi(m) = \prod_{p\mid m} (p+1) p^{k_p-1}$ defining $\psi$ that $\psi(m)$ is even for $m>2$. First we note  that $22, 50, 52, 70, 92,$ $292, 502, 550, 700, 922, 952 \notin \psi(\N)$. Now suppose that $n$ has at least four digits. Then one of the following cases holds:
\begin{itemize}
\item $n$ ends with a $2$ and contains a $7$. Then $72 \triangleleft n$.
\item $n$ ends with a $2$, contains a zero and does not contain a 7. Then $20 \triangleleft n$ or $90 \triangleleft n$ or $502 \triangleleft n$. In the latter case $n$ contains one of the following strings:
\begin{equation*}
5002, 2502, 5202, 5022, 5502, 5052,9502, 5902, 5092.
\end{equation*} 
Then $20 \triangleleft n$ or $90 \triangleleft n$ or $500 \triangleleft n$ or $522 \triangleleft n$ or $552 \triangleleft n$ or $592 \triangleleft n$.
\item $n$ ends with a $2$ and contains only the digits $2$, $5$ and $9$. Then $222 \triangleleft n$ or $252 \triangleleft n$ or $522 \triangleleft n$ or $552 \triangleleft n$ or $992 \triangleleft n$.
\item $n$ ends with a zero and contains a $2$ or a $9$. Then $20 \triangleleft n$ or $90 \triangleleft n$.
\item $n$ ends with a zero and contains a $7$ and a $5$. Then $570 \triangleleft n$ or $750 \triangleleft n$.
\item $n$ ends with a zero and contains only the digits $7$ and $0$. Then $770 \triangleleft n$ or $7000 \triangleleft n$.
\item $n$ ends with a zero and contains only the digits $5$ and $0$. Then $500 \triangleleft n$ or $550 \triangleleft n$.
\end{itemize}
\smallskip

Thus, the only numbers $n$ that need to be examined are of the form $5 \ldots 50$. It suffices to show that if $n=\underbrace{55\ldots 5}_{\ell}0$ with $3\le\ell\le 68$, then $n\notin \psi(\N)$.

We note, that if $m$ has $k$ distinct odd prime factors, then $2^k\mid\psi(m)$. In our case, $n \equiv 2 \mod 4$, so if there is $m \in \N$ with $\psi(m)=n$, then $m = p^k$ or $m=2 p^k$ with $p \in \P$ odd. Hence either $p^{k-1}(p+1)=n$ or $3p^{k-1}(p+1)=n$ with $k\in\N$ and odd $p\in\P$. Note that $4\nmid(p+1)$. A computer search shows that if $p^{k-1}\mid n$ with $k\ge 3$ and $4\nmid(p+1)$, then $k=3$ and $p=5$. This implies that either $n=150$ or $n=450$, which is a contradiction. Next, one can easily verify that there is no prime $p$ with $n=p(p+1)$ or $3p(p+1)$, and so $k\ne 2$. Finally, note that neither $n-1$ nor $(n/3)-1$ is a prime, which yields $k\ne 1$. This concludes the proof.
\end{proof}

\section{The Absence of Structure}

The examples from the previous sections already indicate that minimal sets behave rather unexpectedly. It is easy to obtain some sets in explicit form, while for other sets of {\it higher arithmetical complexity} the explicit form of the corresponding minimal set may be only achieved conditionally. Already the size of minimal sets seems to be almost unpredictable.
\par

For $k\in\N$ define $S_k:=\N\cap [10^{k-1},10^k)$ (the set of positive integers with $k$ digits in base $10$). One easily verifies $M(S_k)=S_k$ which shows that the minimal set can be as large as we please even for finite sets. Moreover, $S_k$ is minimal among all sets $S$ with $\sharp M(S)=9\cdot 10^{k-1}$. In general it seems to be difficult to prove an upper bound for the number of minimal elements in an arbitrary set. 
\par

One might be tempted to try to build up some kind of set theory for minimal sets. The following statement is trivial:
$$
M(S\cup T)\subset M(S)\cup M(T).
$$
The left-hand side above can be equal to the right-hand side (if, for example, $S=2+10\N_0$ and $T=3+10\N_0$) and smaller (if, for example, $S=\{2\}\cup (\P \cap (1+4\N_0))$ and $T=\P \cap (3+4\N_0$)). The same inclusion with $\cap$ in place of $\cup$ on the right hand-side is in general false as follows from the example $S=\P$ and $T=1+4\N_0$. Moreover, even $M(S\cap T)\subset M(S)\cup M(T)$ is false as the counterexample $S=7+10\N$ and $T=\P$ shows. 
\par

Probably, the main obstacle in order to prove something 'structural' (whatever that means) is that $S\mapsto M(S)$ is not monotone: e.g., $\P\subset \N$ but $\sharp M(\P)>\sharp M(\N)$. We conclude that minimal sets indeed behave erratically (but would be glad if the reader could disprove our non-mathematical statement).

\section{An Odd End}

Choosing another base than $10$ would not make things easier in general (see \cite{BDS} for recent results in this direction). Of course, with the binary expansion in place of the decimal expansion a few minimal sets would look more simple, e.g., $M(\P)=\{10_2,11_2\}$. However, there are still plenty of sets, interesting from a  number-theoretical point of view, where the corresponding minimal set seems to be difficult to describe. 
\par

For instance, the set of perfect numbers ${\sf Perfect}=\{6,28,496,\ldots\}$ is conjectured to contain no odd numbers. Recall that a positive integer is called {\it perfect} if it equals the sum of its proper divisors. As already Euclid knew (in different language), every Mersenne prime $2^p-1$ (with prime $p$) yields an even perfect number $2^{p-1}(2^p-1)$, and Leonhard Euler proved that every even perfect number is of this form (see Dickson \cite{dickson} for this and further details about and the history of perfect numbers). \'Edouard Lucas \cite{lucas} showed that every even perfect number different from $6$ and $496$ ends with decimal digits $16,28,36,56$ or $76$. Hence, {\it if there is no odd perfect number, then the minimal set of perfect numbers is given by $M({\sf Perfect})=\{6,28\}$}. If there exists an odd perfect number, although much can be said about its hypothetical multiplicative structure, we cannot exclude the possibility that this odd number is a minimal element. In base $2$ we would have $M({\sf Perfect})=\{110_2\}$ if there is no odd perfect number, and we leave the computation of the minimal set with respect to other bases (under the same assumption) to the interested reader. 

\section*{Acknowledgment}
The authors thank the referee for careful reading of the manuscript and many helpful suggestions.

\medskip

\bigskip

\noindent {\footnotesize Ioulia N. Baoulina, Department of Mathematics, Moscow State Pedagogical University, Krasnoprudnaya str. 14, Moscow 107140, Russia, jbaulina@mail.ru}

\smallskip

\noindent {\footnotesize Martin Kreh, Department for Algebra and Number Theory, University of Hildesheim, Samelsonplatz 1, 31141 Hildesheim, Germany, kreh@imai.uni-hildesheim.de}

\smallskip

\noindent {\footnotesize J\"orn Steuding, Department of Mathematics, W\"urzburg University, Emil-Fischer-Str. 40, 97\,074 W\"urzburg, Germany, steuding@mathematik.uni-wuerzburg.de}

\end{document}